\newcommand{\Cc}{\mathbb{C}}
\newcommand{\Nn}{\mathbb{N}}
\newcommand{\Zz}{\mathbb{Z}}
\newcommand{\Qq}{\mathbb{Q}}
\newcommand{\Ff}{\mathbb{F}}
\renewcommand {\leq}{\leqslant}
\renewcommand {\geq}{\geqslant}
\renewcommand {\le}{\leqslant}
\renewcommand {\ge}{\geqslant}
\newcommand{\Res}{\mathop{\mathrm{Res}}\nolimits}
\newcommand{\lcm}{\mathop{\mathrm{lcm}}\nolimits}
\renewcommand{\ast}{\star}
\newcommand{\defi}[1]{\emph{#1}}
\theoremstyle{plain}
\newtheorem{theorem}{Theorem}[section]    
\newtheorem{lemma}[theorem]{Lemma}       
\newtheorem{proposition}[theorem]{Proposition}      
\newtheorem{corollary}[theorem]{Corollary}      
\newtheorem*{theorem*}{Theorem}
\newtheorem*{hypothesis*}{Schinzel Hypothesis}
\theoremstyle{remark}
\newtheorem*{remark*}{Remark}  
\newtheorem{example}{Example}
\title{Prime and coprime values of polynomials}
\author{Arnaud Bodin}
\author{Pierre D\`ebes}
\author{Salah Najib}
\email{arnaud.bodin@univ-lille.fr}
\email{pierre.debes@univ-lille.fr}
\email{slhnajib@gmail.com}
\address{Universit\'e de Lille, CNRS, Laboratoire Paul Painlev\'e, 59000 Lille, France}
\address{Universit\'e de Lille, CNRS, Laboratoire Paul Painlev\'e, 59000 Lille, France}
\address{Laboratoire ATRES, Facult\'e Polydisciplinaire de Khouribga, Universit\'e Sultan Moulay Slimane, BP 145, Hay Ezzaytoune, 25000 Khouribga, Maroc.}
\subjclass[2010] {Primary 12E05 ; Sec. 11A05, 11A41}
\keywords{prime numbers, irreducible polynomials, gcd, Schinzel Hypothesis.}
\thanks{\emph{Acknowledgment}. 
This work was supported in part by the Labex CEMPI  (ANR-11-LABX-0007-01) 
and by the ANR project ``LISA'' (ANR-17-CE40–0023-01). The first author thanks the University of British Columbia for his visit at PIMS. We thank Bruno Deschamps for his proof of Lemma \ref{lem:discrete}.}
\date{\today}
\begin{document}

\begin{abstract} The Schinzel Hypothesis is a celebrated conjecture in number theory linking polynomial values and prime numbers. In the same vein we investigate the common divisors of values $P_1(n),\ldots, P_s(n)$ of several polynomials. We deduce this coprime version of  the Schinzel Hypothesis: under some natural assumption, coprime polynomials assume coprime values at infinitely many integers. Consequences include a version ``modulo an integer'' of the original Schinzel Hypothesis, with the Goldbach conjecture, again modulo an integer, as a special case.
\end{abstract}

\maketitle



Given polynomials with integer coefficients, famous results and long-standing questions concern the {\it divisibility properties} of their values at integers, in particular their {\it primality}. The polynomial  $x^2+x+41$ which assumes prime values at $0,1,\ldots,39$ is a striking example, going back to Euler. On the other hand, the values of a nonconstant polynomial $P(x)$ cannot be all prime numbers : if $P(0)$ is a prime, then the other value $P(kP(0)) = a_d(kP(0))^d+\cdots +a_1kP(0)+ P(0)$ is divisible by $P(0)$, and, for all but finitely many $k\in \Zz$, is different from $\pm P(0)$, and hence cannot be a prime. 

\medskip

Whether a polynomial may assume infinitely many prime values is a deeper question. Even for $P(x)=x^2+1$, whether there are infinitely many prime numbers of the form $n^2+1$ with $n\in \Zz$ is out of reach. Bunyakowsky conjectured that the question always has an affirmative answer, under some natural assumption recalled below. The Schinzel Hypothesis generalizes this conjecture to several polynomials,
concluding that they should simultaneously take prime values; see Section \ref{sec:Schinzel-hypotheses}. 

\medskip

We follow this trend. Our main results are concerned with the \emph{common divisors of values} $P_1(n), \ldots, P_s(n)$ at integers $n$ of several polynomials, 
see Theorem \ref{th:Dstar} in Section \ref{sec:first} (proved in Section \ref{sec:proof-coprime}) and further complements 
in Section \ref{sec:more}. We can then investigate the \emph{coprimality of values} of polynomials. Generally speaking, we say that $n$ integers, with $n\geq 2$, are coprime if they have no common prime divisor.
While the Schinzel Hypothesis is still open, we obtain this ``coprime'' version: under some suitable assumption, \emph{coprime polynomials assume 
coprime values at infinitely many integers} (Corollary \ref{th:schinzel-coprime}).

\medskip

We deduce a  ``modulo $m$'' variant of the Schinzel Hypothesis, and versions of the Goldbach and the Twin Primes conjectures, again ``modulo $m$''; see Section \ref{sec:Schinzel-hypotheses}. A coprimality criterion for polynomials is offered in Section \ref{ssec:coprime}.  Finally, in Section 6, we discuss generalizations for which $\Zz$ is replaced by a polynomial ring.

\section{Common divisors of values and the coprimality question}
\label{sec:first}

\emph{For the whole paper, $f_1(x),\ldots,f_s(x)$ are nonzero polynomials with integer coefficients.}

\medskip

Assume that the polynomials $f_1(x),\ldots,f_s(x)$ are coprime ($s\geq 2$), i.e.\ they have no common root in $\Cc$. Interesting phenomena occur when 
considering the greatest common divisors: 
$$d_n = \gcd(f_1(n),\ldots,f_s(n)) \quad \text{ with } n\in \Zz.$$

It may happen that $f_1(x),\ldots,f_s(x)$ never assume coprime values, i.e., that none of the integers $d_n$ is $1$.
A simple example is $f_1(x)=x^2-x=x(x-1)$ and $f_2(x)=x^2-x+2$: all values $f_1(n)$ and $f_2(n)$ are even integers.
More generally for $f_1(x)=x^p-x$ and $f_2(x)=x^p-x+p$ with $p$ a prime number, all values $f_1(n)$, $f_2(n)$ are divisible by $p$, by Fermat's theorem.
Rule out these polynomials by assuming that \emph{no prime $p$ divides all 
values $f_1(n),\ldots,f_s(n)$ with $n\in \Zz$}. Excluded polynomials are well-understood:
modulo $p$, they vanish at every element of $\Zz/p\Zz$, hence are divisible by $x^p-x=\prod_{m\in\Zz/p\Zz} (x-m)$; so they are of the form $p g(x) + h(x) (x^p-x)$ with $g(x), h(x)\in \Zz[x]$ for some prime $p$.

With this further assumption, is it always true that $f_1(n),\ldots,f_s(n)$ are coprime for
at least one integer $n$? For example this is the case for $n$ and $n+2$ that are coprime when $n$ is odd. In other words, does the set
$$\mathcal{D}^\ast = \{d_n \mid n\in \Zz\}$$
contain $1$? Studying ${\mathcal D}^\ast$, which, as we will see, is quite intriguing, is a broader goal. 

\begin{example}
\label{example:intro}
Let $f_1(x)=x^2 - 4$ and $f_2(x)=x^3 + 3x + 2$. 
These polynomials are coprime since no root of $f_1$ is a root of $f_2$. 
The values $d_n = \gcd(f_1(n),f_2(n))$, for $n=0,\ldots,20$ are:
$$
2\quad 3\quad 16\quad 1\quad 6\quad 1\quad 4\quad 3\quad 2\quad 1\quad 24 \quad
1\quad 2\quad  3\quad 4\quad 1\quad 6\quad 1\quad 64\quad 3\quad 2
$$
We have in fact $\mathcal{D}^\ast = \{ 1, 2, 3, 4, 6, 8, 12, 16, 24, 32, 48, 64, 96, 192 \}$.
It seems unclear to highlight a pattern from the first terms, but at least the integer $1$ occurs.
\end{example}

A first general observation is that the set $\mathcal{D}^\ast$ is finite. This was noticed for two polynomials by Frenkel-Pelik\'{a}n \cite{FP}. In fact they showed more: the sequence $(d_n)_{n\in \Zz}$ is periodic. We will adjust their argument.
A new result about the set ${\mathcal D}^\ast$ is the stability assertion of the following statement, which is proved in Section \ref{sec:proof-coprime}.
\begin{theorem}
\label{th:Dstar}
Let $f_1(x),\ldots,f_s(x) \in \Zz[x]$ be nonzero coprime polynomials ($s\ge2$).
The sequence $(d_n)_{n\in \Zz}$ is periodic and the finite set $\mathcal{D}^\ast = \{ d_n \}_{n\in\Zz}$ is  stable under gcd and under lcm. Consequently, the gcd $d^\ast$ and the lcm $m^\ast$ of all integers $d_n$ ($n\in \Zz$) are in the set $\mathcal{D}^\ast$.
\end{theorem}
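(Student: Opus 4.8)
The plan is to reduce everything to a local, prime-by-prime analysis, made possible by a single global bound on the $d_n$. First I would use that coprimality of $f_1,\ldots,f_s$ in $\Cc[x]$ is equivalent to coprimality in $\Qq[x]$, so that the ideal they generate in $\Qq[x]$ is all of $\Qq[x]$; writing a B\'ezout relation and clearing denominators produces a positive integer $R$ and polynomials $b_1,\ldots,b_s\in\Zz[x]$ with $\sum_i b_i f_i = R$. Evaluating at any $n$ shows $d_n \mid R$, whence $\mathcal{D}^\ast$ is contained in the set of divisors of $R$ and is therefore finite. For periodicity, I would observe that $d_n \mid R$ forces $d_n = \gcd(f_1(n),\ldots,f_s(n),R)$, and since each residue $f_i(n) \bmod R$ depends only on $n \bmod R$, the integer $d_n$ depends only on $n \bmod R$; thus $(d_n)_{n\in\Zz}$ is $R$-periodic. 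This is the adjustment of the Frenkel--Pelik\'an argument alluded to above.

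For the stability statements I would pass to $p$-adic valuations, using $v_p(d_n) = \min_i v_p(f_i(n))$ together with the facts that only the finitely many primes $p \mid R$ can occur and that $v_p(d_n) \le e_p := v_p(R)$ for all $n$. The crucial local claim is that, for each such $p$, the function $n \mapsto v_p(d_n)$ is periodic with period dividing $p^{e_p+1}$. To prove it I would replace each $v_p(f_i(n))$ by its truncation $\min(v_p(f_i(n)), e_p+1)$, which depends only on $f_i(n) \bmod p^{e_p+1}$, hence only on $n \bmod p^{e_p+1}$; since $\min_i v_p(f_i(n)) \le e_p < e_p+1$, truncating the individual valuations leaves the minimum unchanged, so $v_p(d_n) = \min_i \min(v_p(f_i(n)), e_p+1)$ indeed depends only on $n \bmod p^{e_p+1}$.

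With this in hand, stability follows from the Chinese Remainder Theorem. Given $a,b\in\Zz$, to realize $\gcd(d_a,d_b)$ I would pick, for each prime $p\mid R$, whichever of $a,b$ yields the smaller value of $v_p(d_\cdot)$, and then use CRT (the moduli $p^{e_p+1}$ over distinct $p\mid R$ being pairwise coprime) to select an integer $c$ congruent to the chosen representative modulo each $p^{e_p+1}$. By the local periodicity, $v_p(d_c) = \min(v_p(d_a),v_p(d_b))$ for every prime $p$ (the inequality being trivial, with both sides $0$, when $p\nmid R$), so $d_c = \gcd(d_a,d_b) \in \mathcal{D}^\ast$; selecting the larger valuation at each $p$ instead gives $d_c = \lcm(d_a,d_b)\in\mathcal{D}^\ast$. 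Finiteness of $\mathcal{D}^\ast$ then yields the final consequence at once: iterating the binary gcd (resp.\ lcm) over all of its finitely many elements shows $d^\ast,m^\ast\in\mathcal{D}^\ast$.

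I expect the main obstacle to be the local periodicity claim, and specifically the justification that the a priori unbounded individual valuations $v_p(f_i(n))$ may be truncated at level $e_p+1$ without affecting $\min_i v_p(f_i(n))$; this rests essentially on the global divisibility $d_n \mid R$, which caps the relevant minimum below $e_p+1$. Once this local description of $v_p(d_n)$ is secured, the remaining ingredients (the B\'ezout reduction and the CRT packaging) are routine.
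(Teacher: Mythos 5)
Your proposal is correct and follows essentially the same route as the paper: a Bézout constant bounding every $d_n$, periodicity via the congruences $f_i(n+\ell R)\equiv f_i(n)\pmod R$, and the Chinese Remainder Theorem over prime powers to realize the gcd (the paper uses the pair-dependent moduli $p_i^{\alpha_i+1}$ where you use the uniform $p^{e_p+1}$, a cosmetic difference). Your valuation-theoretic packaging has the small merit of treating gcd and lcm symmetrically, whereas the paper proves only the gcd case and leaves the lcm to the reader.
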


The stability under gcd means that for every $n_1,n_2\in \Zz$, there exists $n\in \Zz$ such that $\gcd(d_{n_1},d_{n_2})=d_n$. 
In Example \ref{example:intro}, the sequence $(d_n)_{n\in \Zz}$ can be checked to be periodic of period $192$ and the set $\mathcal{D}^\ast $ is indeed stable under gcd and lcm.

A consequence of Theorem \ref{th:Dstar} is the following result, proved in
\cite[Theorem 1]{Sc02}; as discussed below in Section \ref{sec:proof-coprime}, it is a ``coprime'' version of the Schinzel Hypothesis.

\begin{corollary}
\label{th:schinzel-coprime}
Assume that $s\geq 2$ and $f_1(x),\ldots,f_s(x)$ are coprime polynomials.
Assume further  that no prime number divides all integers $f_1(n),\ldots,f_s(n)$ for every $n\in \Zz$. Then there exist infinitely many $n\in \Zz$ such that $f_1(n),\ldots,f_s(n)$ are coprime integers.
\end{corollary}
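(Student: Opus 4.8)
The plan is to derive the corollary directly from Theorem~\ref{th:Dstar}, the whole point being to recognize that the arithmetic hypothesis is nothing but a restatement of the equality $d^\ast = 1$, where $d^\ast = \gcd_{n\in\Zz} d_n$ is the global gcd supplied by the theorem.

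First I would translate the hypothesis into a statement about $d^\ast$. For a prime $p$, observe that $p \mid d^\ast$ if and only if $p \mid d_n$ for every $n\in\Zz$, which in turn means $p \mid \gcd(f_1(n),\ldots,f_s(n))$ for every $n$, i.e.\ $p$ divides each of the integers $f_1(n),\ldots,f_s(n)$ for every $n\in\Zz$. Hence the assumption that ``no prime number divides all the values $f_1(n),\ldots,f_s(n)$'' is exactly the assertion that $d^\ast$ has no prime factor, that is $d^\ast = 1$. This reinterpretation is the only genuinely new step, and it is an elementary prime-by-prime verification.

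Second, I would invoke Theorem~\ref{th:Dstar}, which guarantees that $d^\ast$ itself belongs to $\mathcal{D}^\ast$; thus there exists an integer $n_0$ with $d_{n_0} = d^\ast = 1$, equivalently $f_1(n_0),\ldots,f_s(n_0)$ are coprime. To pass from one such integer to infinitely many, I would use the periodicity of $(d_n)_{n\in\Zz}$, also granted by Theorem~\ref{th:Dstar}: if $T\ge1$ is a period, then $d_{n_0+kT} = d_{n_0} = 1$ for every $k\in\Zz$, yielding infinitely many $n$ at which the values are coprime.

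The main obstacle has, in effect, already been surmounted inside Theorem~\ref{th:Dstar}: it is the stability of $\mathcal{D}^\ast$ under gcd, which is precisely what forces the bottom element $d^\ast$ of the set $\{d_n\}$ in the divisibility order to be attained as an actual value $d_{n_0}$, rather than to merely divide all of the $d_n$. Once that stability (together with periodicity) is available, the corollary reduces to the short deduction above and no further difficulty arises.
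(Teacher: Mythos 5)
Your proposal is correct and follows essentially the same route as the paper: identify the arithmetic hypothesis with the equality $d^\ast=1$, use the gcd-stability from Theorem \ref{th:Dstar} to conclude $d^\ast=1\in\mathcal{D}^\ast$, and invoke periodicity of $(d_n)_{n\in\Zz}$ for the infinitude. No gaps.
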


In Example \ref{example:intro}, we have $f_1(1)=-3$ and $f_2(0)=2$, so no prime number divides $f_1(n)$, $f_2(n)$ for every $n\in\Zz$. Corollary \ref{th:schinzel-coprime} asserts that $f_1(n)$ and $f_2(n)$ are coprime integers for infinitely many $n \in \Zz$.

Assuming Theorem \ref{th:Dstar}, here is how Corollary \ref{th:schinzel-coprime} is deduced.

\begin{proof}
The integer $d^\ast$, defined as the gcd of all the $d_n$, is also the gcd of all values $f_1(n),\ldots,f_s(n)$ with $n\in \Zz$. The assumption of Corollary \ref{th:schinzel-coprime} exactly says that $d^\ast = 1$. By Theorem \ref{th:Dstar}, we have $1 \in \mathcal{D}^\ast$, that is: there exists $n\in \Zz$ such that $f_1(n),\ldots,f_s(n)$ are coprime.
Due to the periodicity of $(d_n)_{n\in \Zz}$, the set of such $n$ is infinite.
\end{proof}

\section{The Schinzel Hypothesis} 
\label{sec:Schinzel-hypotheses}

The Schinzel Hypothesis is the following statement; it was denoted by (H) in \cite{SS}.
\begin{hypothesis*}
Assume that $s\geq 1$ and $f_1(x),\ldots,f_s(x)$ are irreducible in $\Zz[x]$. Assume further that no prime number divides the product $\prod_{i=1,\ldots,s}f_i(n)$ for every $n\in \Zz$. Then there exist infinitely many integers $n$ such that $f_1(n),\ldots,f_s(n)$ are all prime numbers.
\end{hypothesis*}

This statement would imply many other conjectures in number theory. For instance with $f_1(x)=x$ and $f_2(x)=x+2$, it yields the Twin Primes conjecture: there exist infinitely many primes $p$ such that $p+2$ is also a prime number. It also provides infinitely many prime numbers of the form $n^2+1$ with $n\in \Zz$; see \cite{SS} and \cite[Ch.~3 and Ch.~6]{Ri} for other problems.

The Schinzel Hypothesis is however wide open. It is only known true when $s=1$ and $\deg(f_1)=1$, and this case is already quite deep. It is indeed the Dirichlet theorem: if $a$, $b$ are coprime nonzero integers, then there are infinitely many $\ell \in \Zz$ such that $a + \ell b$ is a prime number.

Corollary \ref{th:schinzel-coprime} at least provides a ``coprime'' version of the Schinzel Hypothesis. 
This coprime version can then be conjoined with the Dirichlet theorem. This yields the following.
\begin{corollary}	
\label{cor:schinzel-coprime}
Assume that $f_1(x)$ and $f_2(x)$ are coprime polynomials and that no prime number divides $f_1(n)$ and $f_2(n)$ for every $n\in \Zz$. Then, for infinitely many $n\in \Zz$, there exist infinitely many $\ell \in \Zz$ such that $f_1(n)+\ell f_2(n)$ is a prime number.
\end{corollary}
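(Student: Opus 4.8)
The plan is to chain together the coprime version of the Schinzel Hypothesis (Corollary \ref{th:schinzel-coprime}) with the Dirichlet theorem, which is the only genuinely deep input required. First I would check that the hypotheses of Corollary \ref{cor:schinzel-coprime} are precisely those of Corollary \ref{th:schinzel-coprime} specialized to $s=2$: the polynomials $f_1(x), f_2(x)$ are coprime, and no prime number divides both $f_1(n)$ and $f_2(n)$ for every $n \in \Zz$. Corollary \ref{th:schinzel-coprime} then produces an infinite set $N \subseteq \Zz$ of integers $n$ for which $f_1(n)$ and $f_2(n)$ are coprime.

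Next, for each such $n$ I would apply the Dirichlet theorem to the arithmetic progression $\ell \mapsto f_1(n) + \ell f_2(n)$, taking $a = f_1(n)$ and $b = f_2(n)$. Since $\gcd(f_1(n), f_2(n)) = 1$, these integers are coprime, so Dirichlet yields infinitely many $\ell \in \Zz$ with $f_1(n) + \ell f_2(n)$ prime, which is exactly the desired conclusion at $n$. As this holds for each $n$ in an infinite set, the statement follows.

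The one point requiring care — and the main (minor) obstacle — is that the Dirichlet theorem as recalled in the excerpt requires $a$ and $b$ to be \emph{nonzero}, whereas coprimality alone does not rule out $f_1(n) = 0$ or $f_2(n) = 0$ (for instance $\gcd(0, \pm 1) = 1$). I would dispose of this by observing that $f_1$ and $f_2$ are nonzero polynomials and hence have only finitely many integer roots. Removing these finitely many exceptional values from the infinite set $N$ still leaves infinitely many integers $n$ for which $f_1(n)$ and $f_2(n)$ are simultaneously nonzero and coprime; applying Dirichlet to each of these $n$ then completes the argument.
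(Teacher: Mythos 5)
Your proposal is correct and follows exactly the paper's own argument: apply Corollary \ref{th:schinzel-coprime} to get infinitely many $n$ with $f_1(n)$, $f_2(n)$ coprime, then invoke Dirichlet's theorem for each such $n$, discarding the finitely many roots of the polynomials so that the progression is nondegenerate (the paper excludes only the roots of $f_2$, which suffices; your additional exclusion of roots of $f_1$ is harmless). No gaps.
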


\begin{proof}
As no prime number divides $f_1(n)$ and $f_2(n)$ for every $n\in \Zz$, we can apply Corollary \ref{th:schinzel-coprime} to get infinitely many integers $n\in \Zz$ such that $f_1(n)$ and $f_2(n)$ are coprime.
By the Dirichlet theorem for primes in an arithmetic progression, for each of these $n$ except roots of $f_2$, there exist infinitely many $\ell \in \Zz$ such that $f_1(n)+\ell f_2(n)$ is a prime number. 
\end{proof}

Corollary \ref{cor:schinzel-coprime} extends to the case $s\geq 2$. Under the generalized assumption that no prime divides all $f_1(n),\ldots,f_s(n)$ for every $n\in \Zz$, the conclusion becomes: 
\emph{for infinitely many $n\in \Zz$, there exists a ``large'' \footnote{ ``large'' should be understood as \emph{Zariski dense} in $\Zz^{s-1}$; this is the generalization of ``infinite'' for a subset $\mathcal{L}\subset \Zz^{s-1}$: if a polynomial $P(x_2,\ldots,x_s)$ vanishes at every point of ${\mathcal L}$, it has to be the zero polynomial. } set  $\mathcal{L} \subset \Zz^{s-1}$ of tuples $(\ell_2,\ldots,\ell_{s})$ such that $f_1(n)+\ell_2 f_2(n)+\cdots + \ell_s f_s(n)$ is a prime number.} We leave the reader work out the generalization.

We also obtain this ``modulo $m$'' version of the Schinzel Hypothesis.
\begin{corollary}	
\label{cor:schinzel-modulo}
For $s\geq 1$, assume that no prime integer divides $\prod_{i=1,\ldots,s}f_i(n)$ for every $n\in \Zz$. Then, given any integer $m>0$, there exist $n\in \Zz$ such that each of the values $f_1(n),\ldots,f_s(n)$ is congruent to a prime number modulo $m$.
In fact, there are infinitely many integers $n$ such that for each $i=1,\ldots, s$, there are infinitely many prime numbers $p_i$ such that $f_i(n) = p_i \pmod{m}$.
\end{corollary}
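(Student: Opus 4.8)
The plan is to reduce the statement to a single coprimality condition modulo $m$ and then invoke Dirichlet's theorem. First I observe that, for a fixed $n$ and a fixed index $i$, the residue class of $f_i(n)$ modulo $m$ contains infinitely many primes as soon as $\gcd(f_i(n),m)=1$: this is exactly Dirichlet's theorem on primes in arithmetic progressions, applied to the residue $f_i(n) \bmod m$, which is then a unit in $\Zz/m\Zz$. Hence it suffices to produce infinitely many $n\in\Zz$ for which $\gcd(f_i(n),m)=1$ holds simultaneously for all $i=1,\ldots,s$. Since a prime $p\mid m$ divides some $f_i(n)$ precisely when it divides the product, this simultaneous condition is equivalent to the single condition $\gcd(F(n),m)=1$, where $F(x)=\prod_{i=1}^s f_i(x)$.

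Next I would apply Theorem \ref{th:Dstar} to the pair of polynomials $F(x)$ and the constant polynomial $m$. These are two nonzero polynomials, so the hypothesis $s\geq 2$ of the theorem is met whatever the original value of $s$, and they are coprime in $\Cc[x]$ because a nonzero constant has no complex root. Writing $\delta_n=\gcd(F(n),m)$, the theorem tells me that the sequence $(\delta_n)_{n\in\Zz}$ is periodic, that the set $\{\delta_n\}_{n\in\Zz}$ is finite, and, crucially, that the overall gcd $\delta^\ast=\gcd_{n\in\Zz}\delta_n$ belongs to this set, i.e.\ $\delta^\ast=\delta_{n_0}$ for some $n_0\in\Zz$.

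It then remains to identify $\delta^\ast$. By construction $\delta^\ast$ divides $m$ and divides $F(n)$ for every $n$, hence it divides $\gcd_{n\in\Zz}F(n)$. The hypothesis of the corollary, namely that no prime divides $\prod_i f_i(n)=F(n)$ for all $n$, says exactly that $\gcd_{n\in\Zz}F(n)=1$, so $\delta^\ast=1$. Therefore $1$ lies in $\{\delta_n\}_{n\in\Zz}$, and by periodicity there are infinitely many $n$ with $\gcd(F(n),m)=1$; for each such $n$ the reduction of the first paragraph yields, for every $i$, infinitely many primes $p_i$ with $f_i(n)\equiv p_i\pmod{m}$, which is the full conclusion.

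I do not expect a serious obstacle beyond what is already packaged into the assumed results: the argument is essentially a dictionary between the hypothesis and the invariant $\delta^\ast$ furnished by Theorem \ref{th:Dstar}, combined with the one genuinely deep input, Dirichlet's theorem. The only points demanding care are the gcd bookkeeping, namely checking that $\gcd(F(n),m)=1$ is truly equivalent to the simultaneous conditions $\gcd(f_i(n),m)=1$, and that the no-common-prime hypothesis is faithfully encoded as $\gcd_{n\in\Zz}F(n)=1$. The device of pairing $F$ with the constant polynomial $m$ is what lets a single argument cover all $s\geq 1$ uniformly.
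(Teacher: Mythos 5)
Your argument is correct and follows essentially the same route as the paper: the paper also pairs $F_1(x)=\prod_i f_i(x)$ with the constant polynomial $F_2(x)=m$, applies the coprime Schinzel statement (which is exactly Theorem \ref{th:Dstar} specialized as you do it) to get infinitely many $n$ with $\gcd(F_1(n),m)=1$, and then finishes with Dirichlet's theorem. The only cosmetic difference is that you obtain the infinitude of suitable $n$ from the periodicity of $\bigl(\gcd(F(n),m)\bigr)_{n\in\Zz}$ while the paper cites the congruences $f_j(n+\ell m)\equiv f_j(n)\pmod{m}$ directly; these are the same mechanism.
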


\begin{proof}
Fix an integer $m>0$. Consider the two polynomials $F_1(x) = \prod_{j=1,\ldots,s}f_j(x)$ and $F_2(x) = m$. Clearly, $F_1(x)$ and $F_2(x)$ satisfy the assumptions of Corollary \ref{th:schinzel-coprime}. It follows that there exists $n\in \Zz$ such that $F_1(n)=f_1(n)\cdots f_s(n)$ is coprime with $m$.
In particular, each of the integers $f_1(n),\ldots,f_s(n)$ is coprime with $m$. Hence, by the Dirichlet theorem, there exists a prime number $p_j$ such that $p_j = f_j(n) + a_j m$ (for some $a_j\in\Zz$).
In fact the Dirichlet theorem asserts that there are infinitely many such primes $p_j$. 
For $j=1,\ldots,s$ the congruences,
$$f_j(n+\ell m) = f_j(n) \pmod{m}$$
provide the infiniteness of the integers $n$.
These congruences are easily deduced from the basic ones for which $f_j(x)$ is a monomial $x^k$; they will again be used later.
\end{proof}

Corollary \ref{cor:schinzel-modulo} has this nice special case, which can also be found in Schinzel's paper \cite{Sc59} following works of Sierpi\'nski. 	
\begin{example}[Goldbach Theorem modulo $m$] 
\emph{Let $m, \ell$ be two positive integers. Then there exist infinitely many prime numbers $p$ and $q$ such that $p+q= 2\ell \pmod{m}$.}

	\emph{Proof.} Take $f_1(x)=x$ and $f_2(x)=2\ell-x$.
	As $f_1(1)f_2(1) = 2\ell-1$ and $f_1(-1)f_2(-1) = -(2\ell+1)$, no prime number divides $f_1(n)f_2(n)$ for every $n\in \Zz$.
	By Corollary \ref{cor:schinzel-modulo}, there exist $n\in \Zz$ and prime numbers $p$ and $q$ such that $f_1(n) = n$ is congruent to $p \pmod{m}$
	and $f_2(n) = 2\ell - n$ is congruent to $q \pmod{m}$, whence $p+q=2\ell \pmod{m}$.
	
Another example with $f_1(x)=x$ and $f_2(x)=x+2$ gives the \emph{Twin Primes Theorem modulo $m$:} 
	\emph{For every $m>0$, there are infinitely many primes $p$, $q$ such that $q = p+2 \pmod{m}$.} 
\end{example}

\section{Proof of Theorem \ref{th:Dstar}} 
\label{sec:proof-coprime}

After a brief reminder in Section \ref{ssec:reminder}, Theorem \ref{th:Dstar} is proved in Sections \ref{ssec:proof-part1} and \ref{ssec:proof-part2}. Recall that $f_1(x),\ldots,f_s(x)$ are nonzero polynomials with integer coefficients.

\subsection{Reminder on coprimality of polynomials} 
\label{ssec:reminder}

Denote the gcd of $f_1(x),\ldots,f_s(x)$ in $\Qq[x]$ by $d(x)$; it is a polynomial in $\Qq[x]$, well-defined up to a nonzero multiplicative constant in $\Qq$. Polynomials $f_1(x),\ldots,f_s(x)$ are said to be \defi{coprime} if $d(x)$ is the constant polynomial equal to $1$.
These characterizations are well-known:
\begin{proposition}
\label{prop:eqcoprime}
For $s\geq 2$, the following assertions are equivalent:
\begin{itemize}
	\item[(i)] $f_1(x),\ldots,f_s(x)$ are coprime polynomials (i.e.\ $d(x)=1$),
	\item[(ii)] the gcd of $f_1(x),\ldots,f_s(x)$ in $\Zz[x]$ is a constant polynomial,	
	\item[(iii)] $f_1(x),\ldots,f_s(x)$ have no common complex roots,
	\item[(iv)] there exist $u_1(x),\ldots,u_s(x) \in \Qq[x]$ such that a Bézout identity is satisfied, i.e.:
	$$u_1(x) f_1(x) + \cdots + u_s(x) f_s(x) = 1.$$
\end{itemize}
\end{proposition}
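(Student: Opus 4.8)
The plan is to prove that each of (ii), (iii), (iv) is equivalent to (i), exploiting that $d(x)$ is by definition a generator of the ideal of $\Qq[x]$ generated by $f_1(x),\ldots,f_s(x)$, together with two structural facts: $\Qq[x]$ is a principal ideal domain (indeed Euclidean), and $\Cc$ is algebraically closed.

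First I would treat (i) $\Leftrightarrow$ (iv), which is the most direct. Since $\Qq[x]$ is Euclidean, the ideal $I=(f_1(x),\ldots,f_s(x))$ is principal, generated by $d(x)$; hence $d(x)=1$ is equivalent to $I=\Qq[x]$, i.e.\ to $1\in I$, which is exactly the assertion that a Bézout identity holds. One direction can also be seen by hand: if the Bézout identity holds, then any common divisor of the $f_i$ divides $1$, so $d(x)=1$; conversely $d(x)=1$ produces the identity through the extended Euclidean algorithm.

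Next, (i) $\Leftrightarrow$ (iii). The key observation is that the gcd is insensitive to the base field: the monic gcd of $f_1,\ldots,f_s$ computed in $\Cc[x]$ coincides with the monic normalisation of $d(x)$, because the Euclidean algorithm carried out over $\Qq$ remains over $\Qq$ and is unaffected by enlarging the field to $\Cc$. Consequently $\alpha\in\Cc$ is a common root of all the $f_i$ if and only if $(x-\alpha)\mid d(x)$ in $\Cc[x]$. If (i) holds, then $d(x)=1$ has no root and there is no common complex root; conversely, if $d(x)$ is nonconstant, the fundamental theorem of algebra provides a root $\alpha$ of $d(x)$, which is then a common root of all the $f_i$.

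Finally, (i) $\Leftrightarrow$ (ii), for which I would invoke Gauss's Lemma and the notion of content. Let $g(x)$ be a gcd of $f_1,\ldots,f_s$ in the unique factorisation domain $\Zz[x]$. On one hand $g$ divides each $f_i$ in $\Zz[x]$, hence in $\Qq[x]$, so $g\mid d$ and $\deg g\le\deg d$. On the other hand, writing $d=c\,\tilde d$ with $c\in\Qq$ and $\tilde d\in\Zz[x]$ primitive, the relation $\tilde d\mid f_i$ in $\Qq[x]$ upgrades by Gauss's Lemma to $\tilde d\mid f_i$ in $\Zz[x]$, whence $\tilde d\mid g$ and $\deg d=\deg\tilde d\le\deg g$. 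Thus $\deg d=\deg g$, so $d(x)$ is constant if and only if $g(x)$ is constant. The step requiring the most care is precisely this last one, where the passage between $\Zz[x]$ and $\Qq[x]$ must be controlled through contents and Gauss's Lemma in order to compare the two gcds; the remaining verifications are routine.
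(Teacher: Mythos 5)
Your proposal is correct and uses exactly the ingredients the paper's own (sketched) proof relies on — $\Qq[x]$ being a PID for the Bézout identity, the algebraic closedness of $\Cc$ for the root criterion, and Gauss's Lemma with contents to pass between $\Zz[x]$ and $\Qq[x]$. The only difference is organizational: you prove three biconditionals with (i) as the hub, whereas the paper closes the cycle (iv) $\Rightarrow$ (iii) $\Rightarrow$ (ii) $\Rightarrow$ (i) $\Rightarrow$ (iv), which is slightly more economical but mathematically the same argument.
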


A brief reminder: (iv) $\Rightarrow$ (iii) is obvious; 
so is (iii) $\Rightarrow$ (ii) (using that $\Cc$ is algebraically 
closed); (ii) $\Rightarrow$ (i) is an exercise based on ``removing 
the denominators'' and  Gauss's Lemma \cite[IV, \S 2]{La02}; 
and (i) $\Rightarrow$ (iv) follows from $\Qq[x]$ being a Principal Ideal Domain.

In the case of two polynomials, we have this additional equivalence:
$f_1(x)$ and $f_2(x)$ are coprime if and only if their resultant $\Res(f_1,f_2) \in \Zz$ is non-zero. Section \ref{ssec:coprime} offers an alternate method to check coprimality of two or more polynomials.

For the rest of this section, assume that $s\geq 2$ and $f_1(x),\ldots, f_s(x)$ are coprime.
Denote by $\delta$ the smallest positive integer such that there exist $u_1(x),\ldots,u_s(x) \in \Zz[x]$ with $u_1(x)f_1(x)+\cdots+u_s(x)f_s(x)=\delta$. Such an integer exists from the Bézout identity of Proposition \ref{prop:eqcoprime}, rewritten after multiplication by the denominators.

\subsection{Finiteness of ${\mathcal D}^\ast$ and periodicity of $(d_n)_{n\in \Zz}$} \label{ssec:proof-part1}

\begin{proposition} 
\label{prop:dn}
We have the following:
\begin{itemize}
	\item Every integer $d_n$ divides $\delta$ ($n\in \Zz$). In particular, the set ${\mathcal D}^\ast$ is finite.
	\item The sequence $(d_n)_{n\in \Zz}$ is periodic of period $\delta$.
\end{itemize}
\end{proposition}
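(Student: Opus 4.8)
The plan is to treat the two assertions in turn, both resting on the integer Bézout identity $u_1(x)f_1(x)+\cdots+u_s(x)f_s(x)=\delta$ that defines $\delta$.

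For the first assertion I would simply evaluate this identity at an arbitrary integer $n$, obtaining the equality of integers
$$u_1(n)f_1(n)+\cdots+u_s(n)f_s(n)=\delta.$$
Since $d_n=\gcd(f_1(n),\ldots,f_s(n))$ divides each $f_i(n)$, it divides every term on the left-hand side, hence divides $\delta$. As $\delta$ is a fixed positive integer with only finitely many divisors, the set $\mathcal{D}^\ast$ is contained in the finite set of divisors of $\delta$, which settles the finiteness claim.

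For the periodicity, the key elementary fact is the congruence $f(a)\equiv f(b)\pmod{a-b}$ valid for any $f\in\Zz[x]$ and any integers $a,b$; it follows by linearity from $a^k-b^k$ being divisible by $a-b$. Taking $a=n+\delta$ and $b=n$ gives $f_i(n+\delta)\equiv f_i(n)\pmod{\delta}$ for each $i$. I would then establish $d_{n+\delta}=d_n$ by a two-sided divisibility argument. Since $d_n\mid\delta$, reducing the congruence modulo $d_n$ yields $f_i(n+\delta)\equiv f_i(n)\equiv 0\pmod{d_n}$, so $d_n$ divides every $f_i(n+\delta)$ and therefore $d_n\mid d_{n+\delta}$. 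The same reasoning with the roles of $n$ and $n+\delta$ exchanged (using $d_{n+\delta}\mid\delta$) gives $d_{n+\delta}\mid d_n$, whence equality, and hence $(d_n)_{n\in\Zz}$ is periodic of period $\delta$.

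There is no serious obstacle here: once the defining Bézout identity over $\Zz$ is in hand, both parts are short. The only point requiring a little care is the periodicity step, where one must observe that the reduction modulo $\delta$ descends to a reduction modulo each of the divisors $d_n$ and $d_{n+\delta}$ of $\delta$, so that the gcd is preserved under the shift by $\delta$.
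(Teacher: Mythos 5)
Your proof is correct and follows essentially the same route as the paper: evaluating the integer Bézout identity at $n$ to get $d_n \mid \delta$, and using the congruence $f_j(n+\delta) \equiv f_j(n) \pmod{\delta}$ together with $d_n \mid \delta$ for a two-sided divisibility argument establishing periodicity. No issues.
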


Note that the integer $\delta$ need not be the smallest period.
Proposition \ref{prop:dn} is an improved version of results by Frenkel and Pelik\'{a}n \cite{FP}: for two coprime polynomials $f_1(x)$, $f_2(x)$, they show that every $d_n$ divides the resultant $\Res(f_1,f_2)$ of $f_1(x)$ and $f_2(x)$. In fact our $\delta$ divides $\Res(f_1,f_2)$. Next example shows that $\Res(f_1,f_2)$ and $\delta$ may be huge and the sequence $(d_n)_{n\in \Zz}$ may have a complex behavior despite being periodic.

\begin{example}
\label{ex:knuth}
Let $f(x) = x^8+x^6-3x^4-3x^3+x^2+2x-5$ and $g(x) = 3x^6+5x^4-4x^2-9x+21$. These two polynomials were studied by Knuth \cite[Division of polynomials, p.~427]{Kn}.
We have $\Res(f,g) = 25\,095\,933\,394$ and $\delta = 583\,626\,358 = 2 \times 7^2 \times 43 \times 138\,497$. Here are the terms $d_n$ for $0\leq n\leq 39$:
$$
 1\  2\ 1\ 2\ 7\ 2\ 1\ 2\ 1\ 2\ 1\ 14\ 1\ 2\ 1\ 2\ 1\ 2\ 7\ 2\ 
 1\ 86\ 1\ 2\ 1\ 14\ 1\ 2\ 1\ 2\ 1\ 2\ 7\ 2\ 1\ 2\ 1\ 2\ 1\ 98 
$$
Higher values  occur: for instance $d_{1999} = 4214$, $d_{133\,139} = 276\,994$. For this example, the set ${\mathcal D}^\ast$ is exactly the set of all divisors of $\delta$ and the smallest period is $\delta$.
\end{example}

\begin{proof}[Proof of Proposition \ref{prop:dn}]
The identity $u_1(n)f_1(n)+\cdots+u_s(n)f_s(n)=\delta$ implies that $d_n=\gcd(f_1(n),\ldots,f_s(n))$ divides $\delta$ ($n\in \Zz$).
To prove that the sequence $(d_n)_{n\in \Zz}$ is periodic, we use again that $f_j(n+\ell \delta) = f_j(n) \pmod{\delta}$ for every $\ell \in \Zz$ and every $n \in \Zz$.

Fix $n,\ell \in \Zz$. As $d_n$ divides $f_j(n)$ and $\delta$, then by this congruence, $d_n$ divides $f_j(n+\ell \delta)$. This is true for $j=1,\ldots,s$,  whence $d_n$ divides $d_{n+\ell\delta}$.
In the same way we prove that $d_{n+\ell\delta}$ divides $d_n$ ($n,\ell \in \Zz$). Thus $d_{n+\ell\delta}=d_n$ and $(d_n)_{n\in \Zz}$ is periodic of period  $\delta$. 
\end{proof}

\subsection{Stability by gcd and lcm} 
\label{ssec:proof-part2}

\begin{proposition}
\label{th:gcdlcm}
The set $\mathcal{D}^\ast$ is stable under gcd and lcm.
\end{proposition}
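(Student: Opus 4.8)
The plan is to argue one prime at a time and reduce both stability statements to a single congruence construction. Write $v_p$ for the $p$-adic valuation. Since every element of $\mathcal{D}^\ast$ divides $\delta$ (Proposition \ref{prop:dn}), the integers $\gcd(d_{n_1},d_{n_2})$ and $\lcm(d_{n_1},d_{n_2})$ also divide $\delta$; hence only the finitely many primes $p$ dividing $\delta$ can occur, and for each such $p$ I set $e_p=v_p(\delta)$. Because $d_n\mid\delta$, we always have $v_p(d_n)=\min_{1\le j\le s} v_p(f_j(n))\le e_p$.

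The key step is a local periodicity lemma: \emph{for each prime $p$, the map $n\mapsto v_p(d_n)$ depends only on $n$ modulo $p^{e_p}$.} To prove it, fix $n\equiv n'\pmod{p^{e_p}}$. Since $f_j\in\Zz[x]$, this gives $f_j(n)\equiv f_j(n')\pmod{p^{e_p}}$ for every $j$. Now distinguish, for each $j$, whether $v_p(f_j(n))<e_p$ or $v_p(f_j(n))\ge e_p$: in the first case $f_j(n)-f_j(n')$ is divisible by a strictly higher power of $p$, so $v_p(f_j(n'))=v_p(f_j(n))$; in the second case $p^{e_p}$ divides $f_j(n)$, hence $f_j(n')$, so $v_p(f_j(n'))\ge e_p$. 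Taking the minimum over $j$ and using the bound $v_p(d_n)\le e_p$ shows $v_p(d_{n'})=v_p(d_n)$, whether this minimum is $<e_p$ or equal to $e_p$.

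With the lemma in hand, stability follows from the Chinese Remainder Theorem. The moduli $p^{e_p}$, for the distinct primes $p\mid\delta$, are pairwise coprime with product $\delta$. For the lcm, choose for each $p\mid\delta$ a residue $r_p\in\{n_1,n_2\}$ realizing $\max\bigl(v_p(d_{n_1}),v_p(d_{n_2})\bigr)$, and let $n$ satisfy $n\equiv r_p\pmod{p^{e_p}}$ for all such $p$. By the lemma, $v_p(d_n)=v_p(d_{r_p})=\max\bigl(v_p(d_{n_1}),v_p(d_{n_2})\bigr)$ for every $p\mid\delta$, while both sides vanish for $p\nmid\delta$; hence $d_n=\lcm(d_{n_1},d_{n_2})$. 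The gcd case is identical, selecting instead the residue realizing $\min\bigl(v_p(d_{n_1}),v_p(d_{n_2})\bigr)$ at each $p$.

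I expect the local periodicity lemma to be the only real obstacle: the delicate point is that valuations are controlled only up to the threshold $e_p$, so one must treat the indices $j$ with $v_p(f_j(n))\ge e_p$ separately and exploit the a priori bound $v_p(d_n)\le e_p$ coming from $d_n\mid\delta$. Once this is settled, the CRT assembly is routine and yields both stabilities at once; the concluding assertion that $d^\ast,m^\ast\in\mathcal{D}^\ast$ then follows by iterating the binary statements over all pairs, since $\mathcal{D}^\ast$ is finite.
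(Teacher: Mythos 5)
Your proof is correct and follows essentially the same strategy as the paper's: work one prime $p\mid\delta$ at a time, use that $n\equiv n'$ modulo a suitable power of $p$ forces $f_j(n)\equiv f_j(n')$ modulo that power, and assemble the local choices of residue from $\{n_1,n_2\}$ by the Chinese Remainder Theorem. The only notable difference is packaging: you isolate a local periodicity lemma with the uniform modulus $p^{e_p}$ (where the paper uses the target-specific modulus $p_i^{\alpha_i+1}$), which has the pleasant side effect of proving the lcm case in the same stroke, whereas the paper proves only the gcd case and leaves the lcm to the reader.
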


Denote by $d^\ast$ the gcd of all elements of $\mathcal{D}^\ast$ and
by  $m^\ast$ the lcm of those of $\mathcal{D}^\ast$.
Using that $\gcd(a,b,c)=\gcd(a,\gcd(b,c))$ we obtain:
\begin{corollary}
\label{cor:gcd}
The integers $d^\ast$ and $m^\ast$ are elements of ${\mathcal D}^\ast$. Furthermore $d^\ast = \min(\mathcal{D}^\ast)$ is the greatest integer dividing $f_1(n),\ldots,f_s(n)$ for every $n\in\Zz$.
Similarly $m^\ast = \max(\mathcal{D}^\ast)$.
\end{corollary}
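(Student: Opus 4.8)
The plan is to take the stability of $\mathcal{D}^\ast$ under gcd and lcm (Proposition \ref{th:gcdlcm}) as given and reduce everything to it; the remaining work is to pass from \emph{pairwise} stability to the gcd and lcm of the \emph{whole} finite set, and then to identify these with $\min$ and $\max$. Throughout I would use that the elements of $\mathcal{D}^\ast$ are positive integers: since $f_1(x),\ldots,f_s(x)$ are coprime they have no common root, so for each $n$ at least one $f_i(n)$ is nonzero and hence $d_n\geq 1$.

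First I would show $d^\ast\in\mathcal{D}^\ast$. Because $\mathcal{D}^\ast$ is finite (Proposition \ref{prop:dn}), I can enumerate it as $\{d_{n_1},\ldots,d_{n_r}\}$ and compute $d^\ast=\gcd(d_{n_1},\ldots,d_{n_r})$ by the associativity formula $\gcd(a,b,c)=\gcd(a,\gcd(b,c))$, peeling off one element at a time. Each partial gcd lies in $\mathcal{D}^\ast$ by the pairwise stability of the Proposition, so an easy induction shows that the final value $d^\ast$ lies in $\mathcal{D}^\ast$ as well. The identical argument with lcm in place of gcd, using stability under lcm, gives $m^\ast\in\mathcal{D}^\ast$.

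Next I would check that $d^\ast=\min(\mathcal{D}^\ast)$: since $d^\ast$ divides every element of $\mathcal{D}^\ast$ and these are positive integers, $d^\ast\leq d_n$ for every $n$; as moreover $d^\ast\in\mathcal{D}^\ast$, it is the least element. Symmetrically, $m^\ast$ is a common multiple of all the $d_n$, hence $m^\ast\geq d_n$ for every $n$, and being in $\mathcal{D}^\ast$ it is the greatest element, so $m^\ast=\max(\mathcal{D}^\ast)$. Finally, to see that $d^\ast$ is the greatest integer dividing all the values $f_i(n)$, I would note that an integer divides $f_1(n),\ldots,f_s(n)$ for a fixed $n$ if and only if it divides $d_n=\gcd(f_1(n),\ldots,f_s(n))$; hence the integers dividing every $f_i(n)$ for every $n$ are exactly the common divisors of all the $d_n$, whose greatest element is $\gcd_n d_n=d^\ast$.

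I do not expect a serious obstacle here, the substantive content having been placed in Proposition \ref{th:gcdlcm}; the only points requiring a little care are the induction that lifts pairwise stability to the full finite set, and the elementary remark that, among positive integers, divisibility forces the stated inequalities, so that $\min$ and $\max$ can be read off.
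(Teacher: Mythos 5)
Your proposal is correct and matches the paper's argument, which likewise derives the corollary from Proposition \ref{th:gcdlcm} via the associativity $\gcd(a,b,c)=\gcd(a,\gcd(b,c))$ applied over the finite set $\mathcal{D}^\ast$, then reads off $\min$ and $\max$ from divisibility among positive integers. The paper leaves these routine steps implicit; you have simply written them out.
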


\begin{proof}[Proof of Proposition \ref{th:gcdlcm} for the gcd]
We only prove the gcd-stability part and leave the lcm part (which we will not use) to the reader.

Let $d_{n_1}$ and $d_{n_2}$ be two elements of $\mathcal{D}^\ast$.
Let $d(n_1,n_2)$ be their gcd. The goal is to prove that $d(n_1,n_2)$ is an element of $\mathcal{D}^\ast$. The integer $d(n_1,n_2)$ can be written:
$$d(n_1,n_2) = \prod_{i\in I} p_i^{\alpha_i}$$
where, for each $i\in I$,  $p_i$ is a prime divisor of $\delta$ (see Proposition \ref{prop:dn}) and $\alpha_i \in \Nn$ (maybe $\alpha_i = 0$ for some $i\in I$).
Fix $i\in I$. As $p_i^{\alpha_i+1}$ does not divide $d(n_1,n_2)$, $p_i^{\alpha_i+1}$ does not divide $d_{n_1}$ or does not divide $d_{n_2}$;
we name it $d_{m_i}$ with $m_i$ equals $n_1$ or $n_2$. 

The Chinese remainder theorem provides an integer $n$, such that
$$n = m_i \pmod{p_i^{\alpha_i+1}} \quad \text{ for each } i\in I.$$

By definition, $p_i^{\alpha_i}$ divides $d(n_1,n_2)$, so $p_i^{\alpha_i}$ divides all $f_1(n_1),\ldots, f_s(n_1)$, $f_1(n_2),\ldots, f_s(n_2)$.
In particular $p_i^{\alpha_i}$ divides $f_1(m_i),\ldots, f_s(m_i)$, hence also $f_1(n),\ldots, f_s(n)$. Whence $p_i^{\alpha_i}$ divides $d_n$ for each $i\in I$.

Now $p_i^{\alpha_i+1}$ does not divide $f_{j_0}(m_i)$, for some $j_0 \in \{1,\ldots,s\}$.
As $f_{j_0}(n) = f_{j_0}(m_i) \pmod{p_i^{\alpha_i+1}}$, then $p_i^{\alpha_i+1}$ does not divide $f_{j_0}(n)$. Hence $p_i^{\alpha_i+1}$ does not divide $d_n$.

We have proved that $p_i^{\alpha_i}$ is the greatest power of $p_i$ dividing $d_n$, for all $i\in I$. As $d_n$ divides $\delta$, each prime factor of $d_n$ is one of the $p_i$. Conclude that $d(n_1,n_2) = d_n$.
\end{proof}

\section{More on the set ${\mathcal D}^\ast$} 
\label{sec:more} 

Further questions on the set ${\mathcal D}^\ast$ are of interest. The stability under gcd and lcm gives it a remarkable ordered structure. Can more be said about elements of ${\mathcal D}^\ast$? The smallest element $d^\ast$ particularly stands out: it is also the gcd of all values $f_1(n),\ldots,f_s(n)$ with $n\in \Zz$. Can one determine or at least estimate $d^\ast$?

\begin{proposition}
\label{prop:dstar}
Assume that $f_1(x),\ldots,f_s(x)$ are monic. Then $d^\ast$ divides each of the integers $(\deg f_1)!, \ldots, (\deg f_s)!$.
\end{proposition}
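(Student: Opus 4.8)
The plan is to reduce the statement to a single polynomial and then exploit finite differences. By Corollary \ref{cor:gcd}, $d^\ast$ is the greatest integer dividing all the values $f_1(n),\ldots,f_s(n)$ for every $n\in\Zz$; in particular, for each fixed $j$, the integer $d^\ast$ divides $f_j(n)$ for every $n\in\Zz$. Hence it suffices to prove the following one-polynomial statement: if $f\in\Zz[x]$ is monic of degree $d$, then the gcd of all its values $\{f(n)\mid n\in\Zz\}$ divides $d!$. Applying this to $f=f_j$ for each $j$, and using that $d^\ast$ divides the gcd of the values of $f_j$, then yields the proposition.

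To prove the one-polynomial statement I would introduce the forward difference operator $\Delta$ defined by $\Delta g(x)=g(x+1)-g(x)$, which maps $\Zz[x]$ to itself and lowers degree by one. Iterating it gives the classical closed form
$$\Delta^d g(x) = \sum_{k=0}^{d} (-1)^{d-k}\binom{d}{k}\, g(x+k),$$
an expression with integer coefficients. Applied to a monic $g=f$ of degree $d$, the top-degree term contributes $d!$ while all lower-degree contributions are annihilated after $d$ applications of $\Delta$, so $\Delta^d f$ is the constant $d!$. Evaluating at $x=0$ yields the identity
$$d! = \sum_{k=0}^{d} (-1)^{d-k}\binom{d}{k}\, f(k).$$

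I would then conclude at once: since $d^\ast$ divides each of the values $f(0),f(1),\ldots,f(d)$, it divides the integer linear combination on the right-hand side, so $d^\ast\mid d!$. Taking $f=f_j$ for $j=1,\ldots,s$ shows that $d^\ast$ divides each $(\deg f_j)!$, which is the claim.

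There is no serious obstacle in this argument; the only points needing a word of justification are that the coefficients $(-1)^{d-k}\binom{d}{k}$ are integers (clear) and that $\Delta^d f$ equals $d!$ for monic $f$ of degree $d$ (the standard fact that $\Delta$ lowers degree by one while multiplying the leading coefficient by the degree). The monic hypothesis enters precisely at this step: for a general leading coefficient $a_d$ the same computation gives $\Delta^d f = a_d\, d!$, so one would only obtain that $d^\ast$ divides $a_d\,(\deg f_j)!$; the assumption $a_d=1$ is exactly what removes this extraneous factor.
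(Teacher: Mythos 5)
Your proof is correct and follows essentially the same route as the paper: both rest on the finite-difference operator $\Delta$ and the fact that $\Delta^d f = d!\,a_d$ for $f$ of degree $d$. The only cosmetic difference is that you extract the divisibility from the explicit identity $d! = \sum_{k=0}^{d}(-1)^{d-k}\binom{d}{k}f(k)$, whereas the paper (Lemma~\ref{lem:discrete}, with $T=1$ and $a_d=1$) reaches the same conclusion by induction on the degree.
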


The proof relies on the following result.
\begin{lemma}
\label{lem:discrete}
Let  $f(x) = a_dx^d + \cdots + a_1x+a_0$ be a polynomial in $\Zz[x]$ of degree $d$. Fix an integer $T>0$ and fix $m \in \Zz$. If an integer $k$ divides each of $f(m), f(m+T), f(m+2T),\ldots$ then $k$ divides $a_d T^d d!$.
\end{lemma}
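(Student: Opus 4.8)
The plan is to analyze how the divisibility condition on $f$ at an arithmetic progression $m, m+T, m+2T, \ldots$ interacts with the finite-difference operator, since repeated differencing turns a degree-$d$ polynomial into a constant while preserving divisibility by $k$. Concretely, define the forward difference with step $T$ by $(\Delta_T f)(x) = f(x+T) - f(x)$, and observe that if $k$ divides every value $f(m+jT)$ for $j \ge 0$, then $k$ divides every value $(\Delta_T f)(m+jT) = f(m+(j+1)T) - f(m+jT)$ as well. Iterating, $k$ divides every value of $(\Delta_T^{\,i} f)$ on the progression, for each $i = 0, 1, \ldots, d$.

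First I would compute $\Delta_T^{\,d} f$ explicitly. Since $\Delta_T$ lowers the degree by exactly one and multiplies the leading coefficient in a controlled way, applying it $d$ times to a degree-$d$ polynomial yields a constant. For the top-degree term $a_d x^d$, each application of $\Delta_T$ produces a leading term $a_d \cdot d \cdot T \cdot x^{d-1}$ at the first step, and continuing this bookkeeping shows that $\Delta_T^{\,d}(a_d x^d) = a_d\, T^d\, d!$. The lower-order terms of $f$ have degree $< d$ and are annihilated after at most $d$ differences, so the constant polynomial $\Delta_T^{\,d} f$ equals exactly $a_d T^d d!$. Evaluating at $x = m$ (or any point), we get that $k$ divides this constant, which is the desired conclusion.

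The key step in the order I would carry it out is therefore: (1) establish the divisibility-preservation under $\Delta_T$ along the progression by a one-line induction on $i$; (2) prove the identity $\Delta_T^{\,d} f = a_d T^d d!$ as a statement about polynomials, either by direct induction on $d$ tracking only the leading coefficient, or by recalling the closed form $(\Delta_T^{\,d} f)(x) = \sum_{j=0}^{d} (-1)^{d-j} \binom{d}{j} f(x+jT)$ and reading off the top coefficient; and (3) combine the two to conclude $k \mid a_d T^d d!$.

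The main obstacle I expect is the bookkeeping in step (2): one must verify that the iterated difference of the leading monomial contributes the clean factor $T^d d!$ and that no lower-degree terms survive to pollute the constant. This is a standard fact about finite differences — essentially that $\Delta_T^{\,d}$ sends $x^d$ to $d!\,T^d$ and kills all lower powers — but it is the place where a careful induction or an appeal to the binomial-sum formula is genuinely needed rather than merely routine. Once that identity is in hand, the divisibility claim follows immediately, and the hypothesis that $f$ is integer-valued on the progression is exactly what transfers through the differences.
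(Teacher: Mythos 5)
Your proof is correct and follows essentially the same route as the paper: the paper uses the normalized operator $\Delta(P)(x)=\frac{P(x+T)-P(x)}{T}$ and an induction on the degree, where the induction step applies the hypothesis to $T\Delta(f)(x)=Tda_dx^{d-1}+\cdots$, yielding $k\mid (Tda_d)T^{d-1}(d-1)!=a_dT^dd!$ --- exactly your iterated finite-difference computation in slightly different packaging. No substantive difference.
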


For $T=1$, this lemma was obtained by Schinzel in \cite{Sc57}.
If $f(x)$ is assumed to be a primitive polynomial (i.e.\ the gcd of its coefficients is $1$) and $k$ divides $f(m+\ell T)$ (for all $\ell \in \Zz$) then Bhargava's paper \cite{Bh} implies that $k$ divides $T^d d!$ (see theorem 9 and example 17 there). Moreover using a theorem of P\'olya (see \cite[theorem 2]{Bh}), in Proposition \ref{prop:dstar}, we could replace the hypothesis ``$f_j(x)$ is monic'' by ``$f_j(x)$ is primitive'' with the same conclusion on $d^\ast$.

We give an elementary proof below of Lemma \ref{lem:discrete} which was suggested to us by Bruno Deschamps. It uses
the following operator:
 
$$
\begin{array}{cccc}
\Delta :  & \Qq[x] & \longrightarrow & \Qq[x] \\
		  & P(x)   & \longmapsto     & \frac{P(x+T)-P(x)}{T}. \\
\end{array}$$
If $P(x) = a_d x^d + \cdots+ a_0$ is a polynomial of degree $d$, then $\Delta (P)(x)$ is a polynomial of degree $d-1$ of the form $\Delta (P)(x) = d a_d x^{d-1} + \cdots$ By induction, if we iterate this operator $d$ times, we obtain that $\Delta^d (P)(x) = d! a_d$ is a constant polynomial.
The polynomial $\Delta (P)(x)$ is a discrete analog of the derivative $P'(x)$. In particular $\Delta^d (P)(x) = d! a_d$ should be related to the higher derivative $P^{(d)}(x) = d! a_d$.

\begin{proof}[Proof of Lemma \ref{lem:discrete}]
The key observation  is that if $k$ divides  $f(m)$ and $f(m+T)$, then $k$ divides $T \Delta(f)(m)$.
We prove the statement by induction on the degree $d$.
\begin{itemize}
  \item For $d=0$, ``$k$ divides $f(m)$'' is exactly saying ``$k$ divides $a_0$''.
  
  \item Fix $d>0$ and suppose that the statement is true for polynomials of degree less than $d$. Let $f(x) = a_dx^d+ \cdots+a_0$ be a polynomial of degree $d$ satisfying the hypothesis. As $k$ divides $f(m+\ell T)$ for all $\ell \in \Nn$, then $k$ divides $$T \Delta(f)(m+\ell T) = f(m +(\ell+1)T)-f(m+\ell T).$$
 By induction applied to $T \Delta (f)(x) = T d a_d x^{d-1} + \cdots$, 
 the integer $k$ divides the integer $(Tda_d)  T^{d-1} (d-1)! = a_d T^d d!$.
\end{itemize}
\end{proof}

\begin{proof}[Proof of Proposition \ref{prop:dstar}]
For each $j=1,\ldots, s$, the integer $d^\ast$ divides $f_j(n)$ for every $n\in\Zz$. Thus $d^\ast$ divides $(\deg f_j)!$ by Lemma \ref{lem:discrete} (applied with $T=1$ and $a_d=1$).
\end{proof}

We can also derive a result for $m^\ast = \max(\mathcal{D}^\ast) = \lcm(\mathcal{D}^\ast)$.

\begin{proposition}
\label{prop:mstar}
Let $T$ be the smallest period of the sequence $(d_n)_{n\in\Zz}$ and $f_1(x) = a_d x^d + \cdots$ be a polynomial of degree $d$.
Then:
$$T | m^\ast \qquad \text{ and } \qquad m^\ast | a_d T^d d!$$
\end{proposition}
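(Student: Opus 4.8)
The plan is to prove the two divisibilities independently, both resting on facts already available: the periodicity machinery from Proposition \ref{prop:dn}, the congruences $f_j(n+\ell m)\equiv f_j(n)\pmod{m}$ used throughout Section \ref{sec:Schinzel-hypotheses}, and Lemma \ref{lem:discrete}. For the first divisibility $T\mid m^\ast$, the key point I would establish is that $m^\ast$ is \emph{itself} a period of $(d_n)_{n\in\Zz}$; the divisibility then follows from the standard fact that the set of periods of a $\Zz$-indexed periodic sequence, together with $0$, is the subgroup $T\Zz$, so that the smallest period $T$ divides every period. For the second divisibility $m^\ast\mid a_d T^dd!$, I would apply Lemma \ref{lem:discrete} to $f_1$ along the arithmetic progression of step $T$ passing through a point where $(d_n)$ attains its maximal value $m^\ast$.

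For the first part, I would argue that $d_{n+m^\ast}=d_n$ for every $n\in\Zz$, mimicking the periodicity argument in the proof of Proposition \ref{prop:dn} but with $\delta$ replaced by $m^\ast$. Indeed, since $m^\ast=\lcm(\mathcal{D}^\ast)$ and $d_n\in\mathcal{D}^\ast$, the integer $d_n$ divides $m^\ast$. Combining $d_n\mid f_j(n)$, $d_n\mid m^\ast$, and the congruence $f_j(n+m^\ast)\equiv f_j(n)\pmod{m^\ast}$, I get $d_n\mid f_j(n+m^\ast)$ for each $j$, whence $d_n\mid d_{n+m^\ast}$. The reverse divisibility $d_{n+m^\ast}\mid d_n$ follows symmetrically, using that $d_{n+m^\ast}\in\mathcal{D}^\ast$ also divides $m^\ast$. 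Thus $m^\ast$ is a period and $T\mid m^\ast$.

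For the second part, I would use that $m^\ast=\max(\mathcal{D}^\ast)\in\mathcal{D}^\ast$ by Corollary \ref{cor:gcd}, so there is an integer $n_0$ with $d_{n_0}=m^\ast$. Because $T$ is a period, $d_{n_0+\ell T}=d_{n_0}=m^\ast$ for every $\ell$, and since $d_{n_0+\ell T}$ divides $f_1(n_0+\ell T)$, the integer $m^\ast$ divides each of $f_1(n_0), f_1(n_0+T), f_1(n_0+2T),\ldots$. Applying Lemma \ref{lem:discrete} with $f=f_1$ (of degree $d$ and leading coefficient $a_d$), $m=n_0$, and $k=m^\ast$ yields exactly $m^\ast\mid a_d T^d d!$.

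Neither half is deep once the right object is identified, so I do not expect a genuine obstacle; the only subtlety to get right is the recognition, in the first part, that $m^\ast$ is a period (so that minimality of $T$ can be invoked), and, in the second part, the choice of base point $n_0$ attaining the maximum together with the verification that Lemma \ref{lem:discrete} applies along the step-$T$ progression rather than the step-$1$ one. I would double-check that $f_1$ has degree exactly $d$ so that the lemma's leading-coefficient hypothesis is met, which is guaranteed by the statement's normalization $f_1(x)=a_dx^d+\cdots$.
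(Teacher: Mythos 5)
Your proposal is correct and follows essentially the same route as the paper: both halves rest on showing that $m^\ast$ is a period (by the same argument as for $\delta$ in Proposition \ref{prop:dn}) and on applying Lemma \ref{lem:discrete} along a step-$T$ progression. The only cosmetic difference is that you apply the lemma once at a point $n_0$ where $d_{n_0}=m^\ast$, whereas the paper applies it to every $d_n$ and then takes the lcm; both are valid.
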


\begin{proof}
The proof that $m^\ast$ is a period is the same as the one for $\delta$ (see Proposition \ref{prop:dn}). It follows that $T$ divides $m^\ast$. On the other hand, if $(d_n)_{n\in \Zz}$ is periodic of period $T$, then every term
$d_n$ divides $f_1(n+\ell T)$ for all $\ell \in \Zz$. By Lemma \ref{lem:discrete}, $d_n$ divides $a_d T^d d!$.
This is true for each $n$, so $m^\ast = \lcm \{d_n\}_{n\in\Zz}$ also divides $a_d T^d d!$.
\end{proof}

\section{A coprimality criterion for polynomials}
\label{ssec:coprime}

A constant assumption of the paper has been that our polynomials $f_1(x),\ldots, f_s(x)$ are coprime. To test this condition, 
we offer here a criterion only using the values $f_1(n),\ldots, f_s(n)$ that may be more practical than the characterizations from Proposition \ref{prop:eqcoprime}.

Define the \defi{normalized height} of a degree $d$ polynomial 
$f(x)= a_dx^d+ \cdots + a_0$
by
$$H(f) = \max_{i=0,\ldots,d-1} \left| \frac{a_i}{a_d} \right|.$$

\begin{proposition}
\label{prop:coprime}
Let $H$ be the minimum of the normalized heights $H(f_1),\ldots,H(f_s)$. The polynomials $f_1(x),\ldots,f_s(x)$ are coprime if and only if there exists $n\ge 2H+3$ such that $\gcd(f_1(n),\ldots,f_s(n)) \le \sqrt{n}$.
\end{proposition}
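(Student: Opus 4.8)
The plan is to prove the two implications separately. The forward direction — coprimality implies the existence of such an $n$ — is a quick consequence of Theorem \ref{th:Dstar}, because finiteness of $\mathcal{D}^\ast$ means the $d_n$ are bounded while $\sqrt{n}$ grows without bound. The converse I would establish by contraposition: assuming $f_1(x),\ldots,f_s(x)$ are \emph{not} coprime, I would produce a lower bound $\gcd(f_1(n),\ldots,f_s(n)) > \sqrt{n}$ valid for \emph{every} $n \ge 2H+3$, coming from an integer common factor whose complex roots are controlled by the height $H$.

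For the forward direction, assume $f_1(x),\ldots,f_s(x)$ are coprime. By Theorem \ref{th:Dstar} the set $\mathcal{D}^\ast$ is finite; let $m^\ast = \max(\mathcal{D}^\ast)$. Then for any $n$ with $n > (m^\ast)^2$ we have $\gcd(f_1(n),\ldots,f_s(n)) = d_n \le m^\ast < \sqrt{n}$, so any integer $n \ge \max\big(2H+3,\ (m^\ast)^2+1\big)$ witnesses the claim.

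For the converse, argue by contraposition and suppose the polynomials are not coprime. Then their gcd $d(x)$ in $\Qq[x]$ is nonconstant; let $g(x) \in \Zz[x]$ be its primitive representative, so $\deg g \ge 1$. By Gauss's Lemma, $g(x)$ divides each $f_i(x)$ in $\Zz[x]$, hence $g(n) \mid f_i(n)$ for all $n$ and all $i$. Since the cofactors $f_i(x)/g(x)$ are coprime, they have no common root, so for every integer $n$ at least one $f_i(n)$ is nonzero once $g(n)\neq0$; thus $\gcd(f_1(n),\ldots,f_s(n))$ is a positive integer divisible by $|g(n)|$, giving $\gcd(f_1(n),\ldots,f_s(n)) \ge |g(n)|$. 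The key input is the classical Cauchy-type bound: every complex root $\alpha$ of $f = a_dx^d+\cdots+a_0$ satisfies $|\alpha| \le 1 + H(f)$, which I would record with its one-line proof. Each root $\beta$ of $g$ is a common root of all the $f_i$, so $|\beta| \le 1 + H(f_i)$ for every $i$, whence $|\beta| \le 1 + H$ with $H = \min_i H(f_i)$. Writing $g(n) = c\prod_{j}(n-\beta_j)$ with $|c| \ge 1$ and $\deg g \ge 1$, for any $n \ge 2H+3 > 1+H$ each factor obeys $|n - \beta_j| \ge n - 1 - H \ge 1$, so $g(n)\neq0$ and $|g(n)| \ge n - 1 - H$.

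It then remains to verify the elementary inequality $n - 1 - H > \sqrt{n}$ for all $n \ge 2H+3$, which finishes the proof since it yields $\gcd(f_1(n),\ldots,f_s(n)) \ge |g(n)| \ge n-1-H > \sqrt{n}$ for every such $n$. I expect this numerical step to be the only delicate point, and it is exactly what pins down the constant $2H+3$: the function $\phi(n) = n - 1 - H - \sqrt{n}$ is increasing for $n \ge 1$, and at the threshold $\phi(2H+3) = H+2 - \sqrt{2H+3}$ is positive because $(H+2)^2 = H^2+4H+4 > 2H+3$. Everything else in the argument is standard.
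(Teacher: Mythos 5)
Your proof is correct and follows essentially the same route as the paper: the forward direction rests on the boundedness of the values $d_n$ (you cite Theorem \ref{th:Dstar}, the paper re-derives it directly from a B\'ezout identity over $\Zz[x]$), and the converse uses the same Cauchy root bound $|\alpha|<H+1$ applied to a nonconstant integer common factor, with the same threshold computation $(H+2)^2>2H+3$. The only differences are cosmetic (contrapositive versus contradiction, and your explicit care that the integer gcd is well defined when some value vanishes).
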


In particular if $f_1(n),\ldots,f_s(n)$ are coprime (as integers) for some sufficiently large $n$ then $f_1(x),\ldots,f_s(x)$ are coprime (as polynomials).

\begin{example}
~
\begin{itemize}
  \item Take $f_1(x) = x^4 - 7x^3 + 3$, $f_2(x) = x^3 -3x + 3$. We have $H(f_1)=7$, $H(f_2)=3$, so $H = 3$.
For $n = 9 \, (= 2H+3)$, we have $f_1(n) = 1461$, $f_2(n) = 705$. Thus $\gcd(f_1(n),f_2(n)) = 3 \le \sqrt{n}$. From Proposition \ref{prop:coprime}, the polynomials $f_1(x)$ and $f_2(x)$ are coprime.

  \item Here is an example for which the polynomials are not coprime. Take $f_1(x) = x^2-1 = (x+1)(x-1)$, $f_2(x) = x^2+2x+1 = (x+1)^2$. Then $\gcd(f_1(x),f_2(x))=x+1$ and $\gcd(f_1(n),f_2(n)) \ge n+1$.
\end{itemize}
\end{example}

\begin{remark*}
Proposition \ref{prop:coprime} is a coprime analog of the classical idea consisting in using prime values of polynomials to prove their irreducibility. For instance there is this irreducibility criterion by Ram Murty \cite{RM}, which can be seen as a converse to the Bunyakovsky conjecture:
\emph{Let $f(x) \in \Zz[x]$ be a polynomial of normalized height $H$. If $f(n)$ is prime for some $n\ge H+2$, then $f(x)$ is irreducible in $\Zz[x]$.}
\end{remark*}

We first need a classical estimate for the localization of the roots of a polynomial, as in \cite{RM}.

\begin{lemma}[Cauchy bound]
\label{lem:root}
Let $f(x) = a_dx^d+\cdots +a_1x+a_0 \in \Zz[x]$ be a polynomial of degree $d$ and of normalized height $H$. Let $\alpha \in \Cc$ be a root of $f$. Then $|\alpha| < H+1$.
\end{lemma}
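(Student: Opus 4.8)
The plan is to start from the defining relation of a root and control it by a geometric series. Since $\alpha$ is a root of $f$, we have $a_d\alpha^d + a_{d-1}\alpha^{d-1} + \cdots + a_0 = 0$, and, dividing by the leading coefficient $a_d \neq 0$, I would rewrite this as $\alpha^d = -\sum_{i=0}^{d-1}(a_i/a_d)\,\alpha^i$. Taking absolute values and invoking the definition of the normalized height, each ratio $|a_i/a_d|$ is at most $H$, so that $|\alpha|^d \le H\sum_{i=0}^{d-1}|\alpha|^i$.

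The argument then splits according to the size of $|\alpha|$. If $|\alpha| \le 1$, the conclusion $|\alpha| < H+1$ is immediate except in the degenerate case $H = 0$; but $H = 0$ forces every lower coefficient to vanish, so $f(x) = a_d x^d$, whose only root is $0$, and the strict inequality still holds. The substantive case is $|\alpha| > 1$, where I would sum the geometric series as $\sum_{i=0}^{d-1}|\alpha|^i = (|\alpha|^d-1)/(|\alpha|-1)$ and bound it strictly by $|\alpha|^d/(|\alpha|-1)$, using $|\alpha|^d - 1 < |\alpha|^d$ and $|\alpha|-1>0$. Combining with the previous estimate gives $|\alpha|^d < H\,|\alpha|^d/(|\alpha|-1)$. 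Dividing through by the positive quantity $|\alpha|^d$ yields $1 < H/(|\alpha|-1)$, hence $|\alpha| - 1 < H$, that is, $|\alpha| < H+1$.

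There is essentially no serious obstacle: this is the classical Cauchy bound, and the two points meriting a little care are the strictness of the final inequality, which comes precisely from the $-1$ in the numerator of the geometric sum, and the trivial verification of the edge case $|\alpha| \le 1$ (including $H=0$). I note in passing that the case $|\alpha| > 1$ automatically rules out $H = 0$, since $|\alpha|^d \le H\cdot(|\alpha|^d-1)/(|\alpha|-1) = 0$ would be absurd for $|\alpha|>1$, so the division by $H$-free manipulations above are legitimate.
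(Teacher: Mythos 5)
Your proof is correct and follows essentially the same route as the paper's: reduce to the case $|\alpha|>1$, bound $|\alpha|^d$ by $H$ times the geometric sum $\sum_{i=0}^{d-1}|\alpha|^i=(|\alpha|^d-1)/(|\alpha|-1)$, and deduce $|\alpha|-1<H$. Your extra care about the strictness of the inequality and the $H=0$ edge case is a minor refinement of the same argument, not a different approach.
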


\begin{proof}[Proof of Lemma \ref{lem:root}]
We may assume $|\alpha|>1$, since for $|\alpha| \le 1$, Lemma \ref{lem:root} is obviously true.
As $f(\alpha)=0$, $\alpha$ satisfies:
$$|a_d \alpha^d| = \left| a_{d-1} \alpha^{d-1} + \cdots + a_1\alpha + a_0 \right| \le \sum_{i=0}^{d-1} \left| a_i \alpha^i \right|.$$
By dividing by $a_d$, we get:
$$|\alpha^d| \le \sum_{i=0}^{d-1} H \left| \alpha^i \right| = H \frac{|\alpha|^d-1}{|\alpha|-1} \quad \text{ then } \quad
|\alpha|-1 \le H \frac{|\alpha|^d-1}{|\alpha^d|} = H \left( 1 -\frac{1}{|\alpha|^d} \right).$$
So that $|\alpha|-1 \le H$ and the proof is over.
\end{proof}

\begin{proof}[Proof of Proposition \ref{prop:coprime}] ~
\begin{itemize}
	\item $\Longrightarrow$
	Since $f_1(x), \ldots, f_s(x)$ are coprime polynomials, we have a Bézout identity: $u_1(x)f_1(x)+\cdots+u_s(x)f_s(x)=1$
	for some $u_1(x),\ldots,u_s(x)$ in $\Qq[x]$.  By multiplying by an integer $k \in \Zz\setminus \{0\}$, we obtain $\tilde u_1(x)f_1(x)+\cdots+\tilde u_s(x)f_s(x)=k$,	with $\tilde u_1(x),\ldots,\tilde u_s(x)$ being this time in $\Zz[x]$. This gives  $\tilde u_1(n)f_1(n)+\cdots+\tilde u_s(n)f_s(n)=k$ for all $n\in\Zz$, so that $\gcd(f_1(n),\ldots,f_s(n))$ divides $k$.	
      Thus the gcd of $f_1(n),\ldots, f_s(n)$ is bounded, hence it is $\leq \sqrt{n}$ for all sufficiently large $n$.
	
	\item $\Longleftarrow$
	Let $d(x) \in \Zz[x]$ be a common divisor of $f_1(x),\ldots,f_s(x)$ in $\Zz[x]$. By contradiction, assume that $d(x)$ is not a constant polynomial. Consider an integer $n \ge 2H+3$ such that $\gcd(f_1(n),\ldots,f_s(n)) \le \sqrt{n}$.
	On the one hand $d(n)$ divides each of the $f_1(n),\ldots,f_s(n)$, so 
	$|d(n)| \le \gcd(f_1(n),\ldots,f_s(n)) \le \sqrt{n}$.
	
	On the other hand 
	$$d(n) = c\prod_{i\in I} (n-\alpha_i)$$
	for some roots $\alpha_i\in\Cc$, $i\in I$, of $f_1$ (and of the other $f_j$), and $c \in \Zz\setminus \{0\}$.
	By Lemma \ref{lem:root}, we obtain:
	$$|d(n)| = |c| \prod_{i} |n-\alpha_i| > |c| \prod_{i} |n-(H+1)| \ge  |n-(H+1)|.$$
	We obtain $|n-(H+1)| \le \sqrt{n}$, which is impossible for $n \ge 2H+3$.
	
	We conclude that the common divisors of $f_1(x),\ldots,f_s(x)$ in $\Zz[x]$ are constant. 
	Therefore by Proposition \ref{prop:eqcoprime}, the
    polynomials $f_1(x),\ldots,f_s(x)$ are coprime.
\end{itemize}  	
\end{proof}

\section{Polynomials in several variables} 
\label{sec:polynomial-rings}

The Schinzel Hypothesis and its coprime variant can be considered with the ring $\Zz$ replaced by a more general integral domain $Z$. Papers \cite{BDN19a} and  \cite{BDN19b} are devoted to this. The special case that $Z$ is a polynomial ring $\Zz[\underline u]$ stands out; here $\underline u$ can be a single variable or a tuple $(u_1,\ldots,u_r)$ of several variables. ``Prime in $\Zz[\underline u]$'' then means ``irreducible in $\Zz[\underline u]$''.

In \cite{BDN19a}, we prove the Schinzel Hypothesis for $\Zz[\underline u]$ instead of $\Zz$:
\begin{theorem}
\label{th:schinzel-polynomial}
With $s\geq 1$, let $f_1(\underline u,x),\ldots,f_s(\underline u,x)$ be $s$ polynomials, irreducible in $\Zz[\underline u,x]$, of degree $\geq 1$ in $x$.
Then there are infinitely many polynomials $n(\underline u) \in \Zz[\underline u]$
(with partial degrees as large as desired) such that 
$$f_i\big(\underline u,n(\underline u)\big)$$
is an irreducible polynomial in $\Zz[\underline u]$ for each $i=1,\ldots,s$.
\end{theorem}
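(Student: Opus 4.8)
The plan is to obtain the substitution $x\mapsto n(\underline u)$ as a \emph{specialization of a generic substitution}, and then to extract the arithmetic specializations from Hilbert's irreducibility theorem. Note first that $\underline u$ must contain at least one variable (otherwise the statement would be the original, open, Schinzel Hypothesis over $\Zz$); this extra room is exactly what the argument exploits. First I would pass from $\Zz[\underline u,x]$ to the field $\Qq(\underline u)$: since each $f_i$ is irreducible in $\Zz[\underline u,x]$ with positive degree in $x$, it is primitive as a polynomial in $x$ over the UFD $\Zz[\underline u]$, so by Gauss's Lemma (as in Proposition \ref{prop:eqcoprime}) it is irreducible in $\Qq(\underline u)[x]$, of degree $d_i\ge1$. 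Now fix a large integer $D$, introduce indeterminates $\underline c=(c_\alpha)_{|\alpha|\le D}$, set $N(\underline u,\underline c)=\sum_{|\alpha|\le D}c_\alpha\,\underline u^\alpha$ (the generic polynomial of degree $D$), and form
$$\Phi_i(\underline c,\underline u)=f_i\bigl(\underline u,N(\underline u,\underline c)\bigr)\in\Zz[\underline c,\underline u],\qquad i=1,\ldots,s.$$
The goal becomes: produce an integer tuple $\underline c^0$, with nonzero top-degree entries, such that every $\Phi_i(\underline c^0,\underline u)$ is irreducible in $\Zz[\underline u]$; then $n(\underline u)=N(\underline u,\underline c^0)$ works and has large partial degrees.

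Granting the key claim that \emph{each $\Phi_i$ is irreducible in $\Qq(\underline c)[\underline u]$} (as a polynomial in the variables $\underline u$ over the field $\Qq(\underline c)$), the passage to integer values of $\underline c$ is precisely the multivariable Hilbert irreducibility theorem over the Hilbertian field $\Qq$. It furnishes, for each $i$, a Hilbert subset $\mathcal H_i$ of the space of coefficient tuples $\underline c$ such that $\Phi_i(\underline c^0,\underline u)$ is irreducible in $\Qq[\underline u]$ whenever $\underline c^0\in\mathcal H_i$. Since a finite intersection of Hilbert subsets is again a Hilbert subset --- hence Zariski dense and containing infinitely many integer points --- I would choose $\underline c^0\in\bigcap_{i}\mathcal H_i$ with integer coordinates and large top-degree part. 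This gives the simultaneous irreducibility in $\Qq[\underline u]$ of all the $f_i(\underline u,n(\underline u))$, and Zariski density yields infinitely many admissible $n(\underline u)$ of arbitrarily large partial degree. To upgrade irreducibility from $\Qq[\underline u]$ to $\Zz[\underline u]$ one only needs each $f_i(\underline u,n(\underline u))$ to be primitive; this content-$\pm1$ condition is generic and can be imposed together with the Hilbert conditions (alternatively it follows from the coprimality results of the present paper applied to the coefficients of $\Phi_i$).

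The hard part, and the real content of the theorem, is the claim that the generic substitution $\Phi_i=f_i(\underline u,N(\underline u,\underline c))$ is irreducible in $\Qq(\underline c)[\underline u]$. This is a Bertini--Krull type irreducibility statement for the family of graph substitutions $\{x=N(\underline u,\underline c)\}$, and I would establish it in two layers. Geometrically, write $f_i=\prod_{\sigma}f_i^{\sigma}$ for the factorization into absolutely irreducible factors over $\overline{\Qq}(\underline u)$; these are permuted transitively by $\mathrm{Gal}(\overline{\Qq}/\Qq)$ precisely because $f_i$ is irreducible over $\Qq(\underline u)$. Then $\Phi_i=\prod_{\sigma}f_i^{\sigma}(N)$, and a Bertini irreducibility theorem shows each factor $f_i^{\sigma}(N)$ is absolutely irreducible over $\overline{\Qq}(\underline c)$; here it is essential that $D\ge1$ and that the coefficients of $N$ are fully generic, so the substitution family is rich enough (every value being attainable, it has no base points) for Bertini to apply. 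Arithmetically, the Galois action that permutes the $f_i^{\sigma}$ transitively permutes the absolutely irreducible factors $f_i^{\sigma}(N)$ of $\Phi_i$, so their product $\Phi_i$ is already irreducible over $\Qq(\underline c)$. Controlling this Bertini step uniformly in $i$ and verifying the Galois descent is where the difficulty concentrates; the one-variable case $\underline u=u$, where the geometric step reduces to the irreducibility of a generic univariate polynomial over the field generated by its coefficients, is the natural warm-up before treating a tuple $\underline u=(u_1,\ldots,u_r)$.
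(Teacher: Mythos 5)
A preliminary remark: this paper does not actually prove Theorem \ref{th:schinzel-polynomial}; it only states it and refers to \cite{BDN19a} for the proof, so there is no in-text argument to compare yours against. On its own terms, your architecture --- substitute the generic polynomial $N(\underline u,\underline c)$ of degree $D$, prove that $\Phi_i=f_i(\underline u,N)$ is irreducible over $\Qq(\underline c)$, specialize $\underline c$ to integers via Hilbert's irreducibility theorem over the Hilbertian field $\Qq$, and keep track of degree and primitivity --- is sound and is in the spirit of the proof in the cited reference, which likewise rests on a Bertini--Krull type irreducibility statement combined with Hilbert specialization. One comment on emphasis, though: the step you call ``the hard part'' is in fact the easy part, and your Bertini sketch for it is the least reliable portion of the proposal (for irreducibility of the generic member of a linear system, absence of base points is not the relevant hypothesis --- the Bertini--Krull obstruction is being ``composed with a pencil'' --- and the absolute irreducibility of each factor $f_i^{\sigma}(N)$ would still need an argument). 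None of that machinery is needed: since $N=c_0+M(\underline u,\underline c')$ carries a free additive constant, the map $x\mapsto x+M(\underline u,\underline c')$ is an automorphism of $\Qq(\underline c')[\underline u,x]$; as $f_i$ remains irreducible under the purely transcendental base change from $\Qq$ to $\Qq(\underline c')$ (the quotient by $(f_i)$ stays a domain), $f_i(\underline u,x+M)$ is irreducible in $\Qq(\underline c')[\underline u,x]$, and renaming $x$ as $c_0$ and applying Gauss's Lemma in $\bigl(\Qq(\underline c')[c_0]\bigr)[\underline u]$ (legitimate since $\deg_{\underline u}\Phi_i\ge 1$ once $D\ge 1$) gives irreducibility of $\Phi_i$ in $\Qq(\underline c)[\underline u]$.

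The genuine gap is the passage from irreducible in $\Qq[\underline u]$ to irreducible in $\Zz[\underline u]$. Asserting that the content-$\pm1$ condition ``is generic'' is not an argument: Zariski genericity cannot detect congruence obstructions, and such obstructions are exactly the phenomenon this paper is about. What is actually needed is: (i) the content of $\Phi_i(\underline c^0,\underline u)$ is controlled by finitely many primes $p$ (those dividing a B\'ezout constant of the coefficient polynomials $g_{i,\beta}(\underline c)\in\Zz[\underline c]$ of $\Phi_i$, after checking these are coprime in $\Qq[\underline c]$), and for each such $p$ the condition on $\underline c^0$ depends only on $\underline c^0$ mod $p$; (ii) for each such $p$ the bad residue classes form a proper subset --- a counting argument, since $f_i\bmod p$ is nonzero of degree at most $\deg_x f_i$ in $x$ over $\Ff_p(\underline u)$ and so cannot vanish at all reductions of $N$ once $D$ is large; (iii) a Hilbert subset of the $\underline c$-space over $\Qq$ contains integer points in any prescribed residue class, so the congruence conditions and the Hilbert conditions can be met simultaneously. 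Each piece is true and standard, but together they are the arithmetic heart of the theorem and cannot be waved through. Your parenthetical pointer to the coprimality results of this paper is the right instinct, but note that Corollary \ref{th:schinzel-coprime} as stated is a one-parameter statement while $\underline c$ has many coordinates, so even that reduction requires an additional step (e.g.\ restricting to a suitable line in $\underline c$-space or invoking the multivariable version from \cite{BDN19b}).
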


We also prove the Goldbach conjecture for polynomials: \emph{any nonconstant polynomial in $\Zz[\underline u]$ is the sum of two irreducible polynomials of lower or equal degree.}
Furthermore, Theorem \ref{th:schinzel-polynomial} is shown to also hold with the coefficient ring $\Zz$ replaced by more general rings $R$, e.g.\ $R=\Ff_q[t]$. However not all integral domains are allowed. For example, with $\underline u$ a single variable, the result is obviously false with $R=\Cc$, is known to be false for $R=\Ff_q$ by a result of Swan \cite{Sw} and is unclear for $R=\Zz_p$.

In contrast, we prove in \cite{BDN19b} that the coprime analog of Theorem \ref{th:schinzel-polynomial} holds in a much bigger generality.
\begin{theorem}
\label{th:schinzel-coprime-polynomial}
Let $R$ be a Unique Factorization Domain and assume that $R[\underline u]$ is not the polynomial ring $\Ff_q[u_1]$ in a single variable over a finite field.
With $s\geq 2$, let $f_1(\underline u,x),\ldots,f_s(\underline u,x)$ be $s$ nonzero polynomials, with no common divisor in $R[\underline u,x]$ other than units of $R$.
Then there are infinitely many polynomials $n(\underline u) \in R[\underline u]$ such that
$$f_1\big(\underline u,n(\underline u)\big), \  \ldots, \   f_s\big(\underline u,n(\underline u)\big)$$
have no common divisor in $R[\underline u]$ other than units of $R$.
\end{theorem}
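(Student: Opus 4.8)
The plan is to follow the strategy behind Theorem~\ref{th:Dstar} and Corollary~\ref{th:schinzel-coprime}, transposed from $\Zz$ to the ring $A=R[\underline u]$, which is again a unique factorization domain by Gauss's Lemma. Write $K$ for the fraction field of $A$ and regard $f_1,\ldots,f_s$ as elements of $A[x]$. The first step is the $\delta$--construction. Since the $f_i$ have no common divisor in $A[x]$ other than units, they are coprime in $K[x]$ (a positive--degree common factor in $K[x]$ would descend to one in $A[x]$ by Gauss's Lemma), so a Bézout identity $\sum_i u_i(x)f_i(x)=1$ holds in $K[x]$. Clearing denominators yields $\tilde u_1,\ldots,\tilde u_s\in A[x]$ and a nonzero $\delta\in A$ with $\sum_i \tilde u_i(x)f_i(x)=\delta$. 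Specializing $x=n$ for any $n\in A$ shows that $d_n:=\gcd_A\big(f_1(n),\ldots,f_s(n)\big)$ divides $\delta$. As $A$ is a UFD and $\delta\neq0$, only finitely many primes $p_1,\ldots,p_r$ of $A$ (up to units) can divide any $d_n$, so it suffices to produce $n$ for which none of the $p_j$ divides all of $f_1(n),\ldots,f_s(n)$: such a $d_n$ divides $\delta$ yet is coprime to every prime factor of $\delta$, hence is a unit.

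The conceptual heart, and what I expect to be the main obstacle, is the following residue input, which is where the hypothesis $R[\underline u]\neq\Ff_q[u_1]$ must be used and which replaces the explicit assumption ``no prime divides all values'' from Corollary~\ref{th:schinzel-coprime}. Claim: if $A\neq\Ff_q[u_1]$, then $A/(p)$ is an \emph{infinite} integral domain for every nonzero prime $p$ of $A$. Granting this, fix $p=p_j$ dividing $\delta$ and reduce the $f_i$ modulo $p$ to $\bar f_i\in(A/(p))[x]$. Not all $\bar f_i$ are the zero polynomial, for otherwise $p$ would divide every $f_i$ in $A[x]$, contradicting coprimality; say $\bar f_{i_0}\neq0$. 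The set of common zeros of the $\bar f_i$ in the domain $A/(p)$ is then contained in the zero set of $\bar f_{i_0}$, hence finite (at most $\deg_x f_{i_0}$ elements). Since $A/(p)$ is infinite, this common--zero set $B_p$ is a proper subset: there are residues $\bar n$ at which some $\bar f_i$ does not vanish, i.e.\ at which $p$ fails to divide all the $f_i(n)$. Proving the Claim for a general, possibly non--Noetherian, UFD $R$ is the delicate point: a finite quotient $A/(p)$ would be a finite field, forcing $(p)$ to be a principal maximal ideal with finite residue field, a configuration that the variable count of $R[\underline u]$ forbids unless $R$ is a finite field and $\underline u$ is a single variable.

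It remains to combine the finitely many local conditions into one global $n$; here the naive Chinese Remainder Theorem is unavailable, because distinct principal primes of $A$ (for instance $(2)$ and $(u)$ in $\Zz[u]$) need not be comaximal. Instead I would build $n$ by successive approximation. Start from any $n^{(0)}\in A$ and process $p_1,\ldots,p_r$ in turn: having arranged that each of $p_1,\ldots,p_{k-1}$ fails to divide all $f_i(n^{(k-1)})$, set $n^{(k)}=n^{(k-1)}+\big(\prod_{j<k}p_j\big)\,c$ and choose $c\in A$ so that the residue of $n^{(k)}$ modulo $p_k$ lands outside $B_{p_k}$. This is possible because $\alpha:=\prod_{j<k}p_j$ is nonzero in the domain $A/(p_k)$ (the $p_j$ are pairwise non--associate), so $\bar c\mapsto\overline{n^{(k-1)}}+\alpha\,\bar c$ runs over an infinite coset of $A/(p_k)$, which cannot be swallowed by the finite $B_{p_k}$. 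Crucially, the added term $\big(\prod_{j<k}p_j\big)c$ leaves the residues modulo $p_1,\ldots,p_{k-1}$ unchanged, while every later correction is a multiple of a product containing $p_k$ and so preserves the condition just secured at $p_k$. After $r$ steps the resulting $n$ makes every $p_j$ fail to divide all $f_i(n)$, whence $d_n$ is a unit. Infinitude then comes for free: replacing $n$ by $n+\delta\,w$ for arbitrary $w\in A$ gives $f_i(n+\delta w)\equiv f_i(n)\pmod{\delta}$, hence the same divisibility by each $p_j\mid\delta$, producing infinitely many admissible polynomials $n(\underline u)\in R[\underline u]$.
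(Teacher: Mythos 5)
The paper itself does not prove Theorem \ref{th:schinzel-coprime-polynomial}: it is only quoted here from \cite{BDN19b}, so there is no in-paper argument to compare yours against. Judged on its own, your proof is a sound transposition of the machinery of Sections \ref{sec:first}--\ref{sec:proof-coprime} to $A=R[\underline u]$: the Bézout element $\delta$ obtained by clearing denominators, the fact that every $d_n$ divides $\delta$, and the reduction to dodging the finitely many prime factors of $\delta$ all go through verbatim in a UFD. The two genuinely new ingredients you add --- infinitude of the residue domains $A/(p)$, and successive approximation in place of the Chinese Remainder Theorem (which indeed fails for non-comaximal principal primes such as $(2)$ and $(u)$ in $\Zz[u]$) --- are exactly what is needed, and your bookkeeping (later corrections being multiples of the earlier primes, the coset $\overline{n^{(k-1)}}+\bar\alpha\,(A/(p_k))$ being infinite because $\bar\alpha\neq 0$ in a domain) is correct. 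The congruence $f_i(n+\delta w)\equiv f_i(n)\pmod{\delta}$ correctly yields infinitude at the end.

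The one step you assert rather than prove is the Claim that $A/(p)$ is infinite for every prime element $p$ of $A=R[\underline u]$ once $A\neq\Ff_q[u_1]$; your heuristic via ``principal maximal ideals'' would lean on Noetherian dimension theory, which a general UFD $R$ need not satisfy. A short argument valid in general: if $p\in R$, then $A/(p)=(R/pR)[\underline u]$ is a polynomial ring in at least one variable over a nonzero ring, hence infinite; if instead $p$ has positive degree in some variable $u_j$, then no nonzero element of $B:=R[u_i : i\neq j]$ can be a multiple of $p$ (degrees in $u_j$ add over a domain), so $B$ embeds into $A/(p)$, which is therefore infinite unless $B=R$ is finite and $u_j$ is the only variable --- and a finite UFD is a finite integral domain, hence a finite field, giving exactly the excluded case $A=\Ff_q[u_1]$. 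With this inserted, your proof is complete.
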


Theorem \ref{th:schinzel-coprime-polynomial} fails if $R[\underline u] = \Ff_q[u_1]$. Take indeed $f_1(u_1,x) = x^q -x+u_1$
and $f_2(u_1,x) = (x^q -x)^2 +u_1$. For every $n(u_1)\in \Ff_q[u_1]$, the constant term of $n(u_1)^q - n(u_1)$ is zero, so  $f_1(u_1,n(u_1))$ and $f_2(u_1,n(u_1))$ are divisible 
by $u_1$. 

\bibliographystyle{plain}
\bibliography{gcd.bib}

\begin{thebibliography}{10}

\bibitem{Bh}
Manjul Bhargava.
\newblock The factorial function and generalizations.
\newblock {\em Amer. Math. Monthly}, 107(9):783--799, 2000.

\bibitem{BDN19a}
Arnaud Bodin, Pierre D{\`e}bes, and Salah Najib.
\newblock The {S}chinzel hypothesis for polynomials.
\newblock {\em Preprint}, 2019a.

\bibitem{BDN19b}
Arnaud Bodin, Pierre D{\`e}bes, and Salah Najib.
\newblock The relative {S}chinzel hypothesis.
\newblock {\em Preprint}, 2019b.

\bibitem{FP}
P\'{e}ter~E. Frenkel and J\'{o}zsef Pelik\'{a}n.
\newblock On the greatest common divisor of the value of two polynomials.
\newblock {\em Amer. Math. Monthly}, 124(5):446--450, 2017.

\bibitem{Kn}
Donald~E. Knuth.
\newblock {\em The art of computer programming. {V}ol. 2}.
\newblock Addison-Wesley, Reading, MA, 1998.
\newblock Seminumerical algorithms, Third edition.

\bibitem{La02}
Serge Lang.
\newblock {\em Algebra}, volume 211 of {\em Graduate Texts in Mathematics}.
\newblock Springer-Verlag, New York, third edition, 2002.

\bibitem{RM}
M.~Ram~Murty.
\newblock Prime numbers and irreducible polynomials.
\newblock {\em Amer. Math. Monthly}, 109(5):452--458, 2002.

\bibitem{Ri}
Paulo Ribenboim.
\newblock {\em The new book of prime number records}.
\newblock Springer-Verlag, New York, 1996.

\bibitem{Sc02}
A.~Schinzel.
\newblock A property of polynomials with an application to {S}iegel's lemma.
\newblock {\em Monatsh. Math.}, 137(3):239--251, 2002.

\bibitem{Sc57}
Andr\'{e} Schinzel.
\newblock Sur les diviseurs naturels des polyn\^{o}mes.
\newblock {\em Matematiche (Catania)}, 12:18--22, 1957.

\bibitem{Sc59}
Andr\'{e} Schinzel.
\newblock D\'{e}monstration d'une cons\'{e}quence de l'hypoth\`ese de
  {G}oldbach.
\newblock {\em Compositio Math.}, 14:74--76, 1959.

\bibitem{SS}
Andrzej Schinzel and Waclaw Sierpi\'{n}ski.
\newblock Sur certaines hypoth\`eses concernant les nombres premiers.
\newblock {\em Acta Arith.}, 4:185--208; erratum 5 (1958), 259, 1958.

\bibitem{Sw}
Richard~G. Swan.
\newblock Factorization of polynomials over finite fields.
\newblock {\em Pacific J. Math.}, 12:1099--1106, 1962.

\end{thebibliography}

\end{document}